\newtheorem{thm}{Theorem}[section]
\newtheorem{lem}[thm]{Lemma}
\newtheorem{prop}[thm]{Proposition}
\theoremstyle{definition}
\theoremstyle{remark}
\begin{document}

\title[Linear Type-$p$ Most-Perfect Squares]{Linear Type-$p$ Most-Perfect Squares}
\author{John Lorch}
\address{Department of Mathematical Sciences\\ Ball State University\\Muncie, IN  47306-0490}
\email{jlorch@bsu.edu}
\subjclass[2010]{05B30, 15B33}
\date{December 27, 2017}
\begin{abstract}
We describe a generalization of most-perfect magic squares, called type-$p$ most-perfect squares, and in prime-power orders we give a linear construction of these squares  reminiscent of de la Loub\`{e}re's classical magic square construction method. Type-$p$ most-perfect squares can be used to construct other interesting squares (e.g., generalized Franklin squares) and our linear construction may have implications for counting type-$p$ most-perfect  squares.
\end{abstract}
\maketitle

\def\a{{\bf a}}
\def\MF{M^{2\times 2}(\F )}
\def\rtimes{{\times\!\! |}}
\def\rk{{\rm rank}}
\def\A{{\mathcal A}}
\def\reg{{\mathcal R}}
\def\P{{\mathcal P}}
\def\End{{\rm End}}
\def\F{{\mathbb F}}
\def\calF{{\mathcal F}}
\def\GL{{\rm GL}}
\def\L{{\mathcal L}}
\def\R{{\mathbb R}}
\def\N{{\mathbb N}}
\def\C{{\mathbb C}}
\def\Z{{\mathbb Z}}
\def\B{{\mathbb B}}
\def\G{{\mathcal G}}
\def\g{\mathfrak g}
\def\OOA{{\rm OOA}}
\def\OA{{\rm OA}}
\def\S{{\mathcal S}}
\def\SA{{\rm SA}}
\def\SD{{\rm SD}}
\def\TD{{\rm TD}}
\def\Q{{\mathcal Q}}
\def\pt#1{{\langle  #1 \rangle}}
\def\ds{\displaystyle}
\def\v{{\bf v}}
\def\w{{\bf w}}

\makeatletter
\def\Ddots{\mathinner{\mkern1mu\raise\p@
\vbox{\kern7\p@\hbox{.}}\mkern2mu
\raise4\p@\hbox{.}\mkern2mu\raise7\p@\hbox{.}\mkern1mu}}
\makeatother

\section{Introduction}
Let $n$ be a natural number divisible by $p$. A natural pandiagonal magic square $R$ of order $n$ is said to be a {\bf most-perfect square of type-$p$} if the following two properties hold:
  \begin{itemize}
  \item[(i)] ({\bf Complementary property}) Starting from any location in $R$, consider the symbol in that location together with the $p-1$ other symbols lying in the same broken main-diagonal $n/p$ units apart from one another. The sum of these symbols is $\displaystyle \frac{p(n^2-1)}{2}$.
  \item[(ii)] ({\bf $p\times p$ property}) The symbols in any $p\times p$ subsquare formed from consecutive rows and columns (allowing wraparound) sum to $\displaystyle \frac{p^2(n^2-1)}{2}$.
  \end{itemize}

 Examples of type-$2$ and type-$3$ most-perfect squares are given in Figure \ref{f:mps8}.

\begin{figure}[h]
{\tiny
$$
\begin{array}{|cccc|cccc|}
\hline
 0 & 31 & 48 & 47 & 56 & 39 & 8 & 23 \\
 59 & 36 & 11 & 20 & 3 & 28 & 51 & 44 \\
 6 & 25 & 54 & 41 & 62 & 33 & 14 & 17 \\
 61 & 34 & 13 & 18 & 5 & 26 & 53 & 42 \\
 \hline
 7 & 24 & 55 & 40 & 63 & 32 & 15 & 16 \\
 60 & 35 & 12 & 19 & 4 & 27 & 52 & 43 \\
 1 & 30 & 49 & 46 & 57 & 38 & 9 & 22 \\
 58 & 37 & 10 & 21 & 2 & 29 & 50 & 45 \\
 \hline
\end{array}
\qquad
\begin{array}{|ccc|ccc|ccc|}
\hline
 0 & 16 & 23 & 63 & 79 & 59 & 45 & 34 & 41 \\
 64 & 80 & 57 & 46 & 35 & 39 & 1 & 17 & 21 \\
 47 & 33 & 40 & 2 & 15 & 22 & 65 & 78 & 58 \\
 \hline
 7 & 14 & 18 & 70 & 77 & 54 & 52 & 32 & 36 \\
 71 & 75 & 55 & 53 & 30 & 37 & 8 & 12 & 19 \\
 51 & 31 & 38 & 6 & 13 & 20 & 69 & 76 & 56 \\
 \hline
 5 & 9 & 25 & 68 & 72 & 61 & 50 & 27 & 43 \\
 66 & 73 & 62 & 48 & 28 & 44 & 3 & 10 & 26 \\
 49 & 29 & 42 & 4 & 11 & 24 & 67 & 74 & 60 \\
 \hline
\end{array}
$$}
    \caption{Left: A type-$2$ (classical) most-perfect square of order-8. Right: A type-$3$ most-perfect square
    of order $9$. The gridlines serve as an aid in locating complementary entries.}
    \label{f:mps8}
\end{figure}

In this article we use a linear method to construct certain type-$p$ most-perfect squares of order $p^r$, where $p$ is any prime.
In 1688 French diplomat Simon de la Loub\`{e}re returned from Thailand and wrote an account of his travels \cite{sL93}. In that account he relates a ``knight's move" magic square construction method he learned from his hosts; see \cite{wB87} for a description. We view De la Loub\'{e}re's method as a linear construction over a finite field.  Magic squares constructed by this method, or some suitable variation, are called {\bf linear magic squares}. This technique has shown promise in constructing new magic squares and rectangles (e.g., see \cite{jL14} and \cite{jL12}), and now we apply it to type-$p$ most-perfect squares.

Type-$p$ most-perfect squares specialize to classical most-perfect squares when $p=2$, in which case $n$ must be doubly even \cite{cP19}. The tasks of counting and constructing classical most-perfect squares were first approached by McClintock \cite{eM97} and culminate in the work of Ollerenshaw and Bree \cite{kO98}, which gives a count of the classical most-perfect squares for any doubly even order $n$, along with a construction method for all such squares. Also, classical most-perfect squares are useful in constructing Franklin magic squares (see \cite{pL06} and \cite{rN16}, and \cite{pP01} for historical background).

Aside being interesting in its own right, there are two primary motives for our linear construction of type-$p$ most-perfect squares. First, a generalized notion of Franklin magic squares is presented in \cite{jL18}. Such squares have orders that are triply divisible by some prime $p$, whereas the classical Franklin squares (more precisely, the ones of which we are aware) have orders triply divisible by $2$. Type-$p$ most-perfect squares can be used to produce examples of these generalized Franklin squares. Second, one may observe that all of the ten McClintock order-8 classical most-perfect squares are linear magic squares. These ten squares generate all most-perfect squares of order $8$  (\cite{eM97} and \cite{kO98}). This scant evidence suggests that it may be possible to cast the construction and counting of type-$p$ most-perfect squares of order $p^r$ in terms of linear squares.

\section{A Characterization of Certain Type-$p$ Most-Perfect Squares}
In this section we present a characterization of type-$p$ most-perfect squares with order divisible by $p^2$. Establishing the validity of our linear construction of such squares (Section \ref{s:construction}) is made easier through the use of this characterization.

\begin{lem}\label{l:diagsum}
Let $m,n\in \N$ and consider a nonnegative integer array $A$ of size  $(mp+1)\times (np+1)$ with
$$
A= \begin{array}{c|ccc|c}
a & \ & v & \ & b \\
\hline
\ & \ & \ & \ & \ \\
u & \ & D &\ & w\\
\ & \ & \ & \ & \ \\
\hline
c & \ & z & \ & d
\end{array}.
   $$
 Here $a,b,c,d\in \Z$, $u,w$ are lists of length $mp-1$, $v,z$ are lists of length $np-1$, and $D$ is an $(mp-1) \times (np-1)$ array. If $A$ possesses the $p\times p$ property then $a+ d=c+ b$.
\end{lem}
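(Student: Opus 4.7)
The plan is to double-count the entries of two large rectangular subarrays of $A$ in two different ways using the $p\times p$ property, and then combine the resulting identities to extract the corner equality.

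First I would let $S$ denote the common value of the sum of any $p\times p$ block of consecutive rows and columns. Consider the $mp \times np$ subrectangle of $A$ formed by the top $mp$ rows and the left $np$ columns. It partitions into $mn$ pairwise disjoint $p\times p$ blocks (grouping rows in packets of $p$ from the top and columns in packets of $p$ from the left), each summing to $S$ by the $p\times p$ property. Hence the rectangle has total sum $mnS$. The analogous tiling applied to the top $mp$ rows together with the \emph{right} $np$ columns (grouping columns in packets of $p$ from the right) also yields sum $mnS$. The two rectangles overlap in columns $2,\ldots,np$, so subtracting one total from the other telescopes to
$$\sum_{i=1}^{mp} A_{i,1} \;=\; \sum_{i=1}^{mp} A_{i,np+1}.$$

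Repeating the argument verbatim with the \emph{bottom} $mp$ rows (rows $2$ through $mp+1$) in place of the top $mp$ rows gives
$$\sum_{i=2}^{mp+1} A_{i,1} \;=\; \sum_{i=2}^{mp+1} A_{i,np+1}.$$
Subtracting one of these from the other causes all of rows $2,\ldots,mp$ to cancel on each side, leaving exactly $A_{1,1}-A_{mp+1,1} = A_{1,np+1}-A_{mp+1,np+1}$, which is $a-c = b-d$, i.e., $a+d = b+c$ as claimed.

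I do not foresee a real obstacle: the argument is purely combinatorial and rests only on the tileability of the $mp \times np$ rectangles by $p\times p$ blocks (guaranteed by the stated dimensions of $A$) and on the $p\times p$ property, which is the single hypothesis. The one small item to verify is that all four tilings involve only genuine consecutive-row and consecutive-column windows that sit inside $A$, so no wraparound is required; this is automatic because $mp+1 \ge p+1$ and $np+1 \ge p+1$.
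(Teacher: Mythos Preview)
Your proof is correct and follows essentially the same idea as the paper: both arguments use that each of the four $mp\times np$ corner subrectangles of $A$ tiles into $mn$ consecutive $p\times p$ blocks and hence has the same total sum, then cancel the common pieces to isolate $a+d=b+c$. The paper arranges this as a single identity $(a+u+v+D)+(d+z+w+D)=(b+v+w+D)+(c+u+z+D)$ and cancels, whereas you reach the same conclusion via two intermediate column-sum equalities and a final subtraction.
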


\begin{proof}
By the $p\times p$ property
$$
a+ u+ v+ D =b+ v+ w + D=c+ u+ z+ D =d+ z+ w+ D,
  $$
where the additions indicate the total sums of symbols in each type of list. It follows that
$$
(a+ u+ v+ D) + (d+ z+ w+ D)=(b+ v+ w + D)+ (c+ u+ z+ D ),
  $$
and cancellation gives the result.
\end{proof}

\begin{prop}\label{p:mpspcondition}
Let $n$ be a multiple of $p^2$. Any natural square of order $n$ that possesses both the complementary property and the $p\times p$ property is a most-perfect square of type $p$.
\end{prop}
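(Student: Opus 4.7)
The plan is to verify the pandiagonal magic condition, since ``type-$p$ most-perfect'' is defined as \emph{natural pandiagonal magic} together with the two hypothesized properties. So I need to show that every row, every column, every broken main-diagonal, and every broken anti-diagonal of $R$ sums to $M := n(n^2-1)/2$. Throughout I would write $n = p^2 m$ and index entries as $R[i,j]$ with $i, j \in \Z / n \Z$.

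For rows, tiling any $p$ consecutive rows by $n/p$ disjoint $p \times p$ blocks and applying the $p \times p$ property shows that the sum of any $p$ consecutive row-sums is $(n/p)\cdot p^2(n^2-1)/2 = pM$; hence the row-sum function $R(\cdot)$ has period $p$. Summing the complementary identity at $(i,j)$ over all $j$ with $i$ fixed gives $\sum_{k=0}^{p-1} R(i + kpm) = pM$, and since row sums have period $p$ and each $kpm$ is a multiple of $p$, every term equals $R(i)$, so $R(i) = M$. Column sums are obtained analogously. For broken main-diagonals, each length-$n$ diagonal decomposes into $pm$ disjoint complementary $p$-tuples of the form $\{(i+kpm,\, i+s+kpm) : 0 \le k < p\}$, each summing to $p(n^2-1)/2$; hence the diagonal sum is $pm \cdot p(n^2-1)/2 = M$.

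The main obstacle is the broken anti-diagonal sum $A(s) := \sum_{i=0}^{n-1} R[i, s-i]$, since the complementary hypothesis addresses only main-diagonals. I would first establish that $A$ has period $p$ by using Lemma \ref{l:diagsum}: applying it to the sub-array with corners $(a, j), (a, j+p), (kp+a, j), (kp+a, j+p)$ yields
\[
R[kp+a,\, j+p] - R[kp+a,\, j] \; = \; R[a,\, j+p] - R[a,\, j]
\]
for all integers $a, j, k$. Decomposing $A(s+p) - A(s)$ by the residue class $a = i \bmod p$ and applying the identity above reduces the difference, for each fixed $a$, to a cyclic telescoping sum $\sum_{k=0}^{n/p-1} \bigl(R[a,\, s-a-(k-1)p] - R[a,\, s-a-kp]\bigr)$ which vanishes. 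Hence $A(s+p) = A(s)$. Finally, summing the complementary identity at $(i, s-i)$ over $i = 0, \ldots, n-1$ and re-indexing the inner sum produces $\sum_{k=0}^{p-1} A(s + 2kpm) = pM$; since $A$ has period $p$ and each $2kpm$ is a multiple of $p$, all $p$ summands equal $A(s)$, yielding $A(s) = M$. The hypothesis $p^2 \mid n$ is used critically to ensure $pm$ is a multiple of $p$, which is what makes each period-$p$ identity collapse to a single value.
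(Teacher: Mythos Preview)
Your proof is correct. The treatment of rows, columns, and broken main diagonals is essentially the same as the paper's: both use the $p\times p$ property to conclude that row sums $n/p$ apart coincide (your ``period $p$'' statement is exactly this, since $n/p$ is a multiple of $p$), and both combine this with a summed complementary identity to isolate a single row sum.

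Where you genuinely diverge is in the anti-diagonal argument. The paper proves a stronger, local statement: it shows that every $p$-term \emph{off-diagonal} complementary sum equals $p(n^2-1)/2$ by using Lemma~\ref{l:diagsum} to swap pairs of entries until the off-diagonal $p$-tuple is rearranged into a main-diagonal $p$-tuple (and then invokes the complementary hypothesis). This requires a separate remark for $p=2$ and a pairing scheme for odd $p$. Your approach is global and uniform in $p$: you use Lemma~\ref{l:diagsum} together with a cyclic telescoping sum to show that the \emph{full} anti-diagonal sum $A(s)$ has period $p$, and then average the main-diagonal complementary identity along an anti-diagonal to obtain $\sum_{k=0}^{p-1}A(s+2kpm)=pM$, which collapses to $A(s)=M$. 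Both routes hinge on Lemma~\ref{l:diagsum} and on $p\mid n/p$; the paper's route yields the off-diagonal complementary property as a byproduct, while yours avoids the parity case split and the explicit pairing combinatorics.
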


\begin{proof}
Let $R$ be a square satisfying the hypotheses of the theorem. It suffices to show that $R$ is a pandiagonal magic square. The complementary property implies that broken main diagonals (i.e., translates--not cosets--of the main diagonal) achieve the magic sum. It remains to show that the same is true for rows, columns, and broken off-diagonals.

Let $\rho_ j$ ($0\leq j\leq n-1$) denote the integer sum of entries in the $j$-th row of $R$. By the complementary property we have
$$
\rho _0 + \rho _{n/p} + \rho _{2n/p}+ \cdots + \rho_{(p-1)n/p}= n\cdot \frac{p(n^2-1)}{2},
   $$
where in the righthand side the right term is the complementary sum, and $n$ is the number of entries per row. Meanwhile, because $n/p$ is a multiple of $p$, the $p\times p$ property implies that
$$
\rho _0 = \rho _{n/p} =\rho _{2n/p}=\cdots =\rho_{(p-1)n/p}.
   $$
Conclude that
$$
\rho _0 =\frac{1}{p}[\rho _0 + \rho _{n/p} + \rho _{2n/p}+ \cdots + \rho_{(p-1)n/p}]=\frac{n(n^2-1)}{2}.
   $$
Therefore the top row of $R$ has the magic sum; the same argument may be applied to any row or column of $R$.

It remains to  show that $R$ is pandiagonal. We do this by showing that $R$ must have the following off-diagonal complementary property:
Starting from any location in $R$, consider the symbol in that location together with the $p-1$ other symbols lying in the same broken off-diagonal $n/p$ units apart from one another. The sum of these symbols is $\displaystyle \frac{p(n^2-1)}{2}$.

 Our strategy is to  show  that each of these broken off-diagonal sums corresponds to a broken main-diagonal sum. This is evident when $p=2$, so we assume that $p$ is odd. Subdivide $R$ into $(n/p) \times (n/p)$ subsquares--there are $p^2$ such squares. Let $a_{i,j}$ represent the entry in the lower-left corner of the $(i,j)$-subsquare, with $0\leq  i,j\leq p-1$ counting from left to right and top  to bottom. Observe that
$$
S= a_{p-1,0}+ a_{p-2,1}+ \cdots + a_{0,p-1}
   $$
is an off-diagonal sum as described above in the off-diagonal complementary property. By Lemma \ref{l:diagsum} (which requires that $n/p$ be a multiple of $p$)  we have
\begin{align*}
S&=[a_{p-1,0}+ a_{1,p-2}]+ [a_{p-2,1}+ a_{2,p-3}]+ \cdots + [a_{(p+1)/2,(p-3)/2}+ a_{(p-1)/2,(p-1)/2}]+ a_{0,p-1} \\
&=[a_{1,0}+ a_{p-1,p-2}]+ [a_{2,1}+ a_{p-2,p-3}]+ \cdots + [a_{(p-1)/2,(p-3)/2}+ a_{(p+1)/2,(p-1)/2}]+ a_{0,p-1} .
  \end{align*}
  Observe that the latter sum in the equation above is a {\em main} broken diagonal complementary sum. Therefore, by the complementary property for main broken diagonals we have $S=\frac{p(n^2-1)}{2}$, as desired. A similar argument can be applied to any complementary broken off-diagonal sum.

The position of the terms in the complementary sums described above is illustrated in the following array:
{\tiny
$$
\begin{array}{|cc|cc|cc|cc||cc|}
\hline
\ & \ & \ & \ & \ & \ & \ & \ & \ & \ \\
\ & \ & \ & \ & \ & \ & \ & \ & a_{0,p-1} & \ \\
\hline \hline
\ & \ & \ & \ & \ & \ & \ & \ & \ & \ \\
a_{1,0} & \ & \ & \ & \ & \ & a_{1,p-2} & \ & \ & \ \\
\hline
\ & \ & \ & \ & \ & \ & \ & \ & \ & \ \\
\ & \ & a_{2,1} & \ & a_{2,p-3} & \ & \ & \ & \ & \ \\
\hline
\ & \ & \ & \ & \ & \ & \ & \ & \ & \ \\
\ & \ & a_{p-2,1} & \ & a_{p-2,p-3} & \ & \ & \ & \ & \ \\
\hline
\ & \ & \ & \ & \ & \ & \ & \ & \ & \ \\
a_{p-1,0} & \ & \ & \ & \ & \ & a_{p-1,p-2} & \ & \ & \ \\
\hline
\end{array}
  $$}
\end{proof}

\section{Magic Squares via Linearity}\label{ss:linearity}
In this section we show how linear transformations can be used to construct arrays of numbers, including magic squares. The essence of these ideas dates back to Simon de la Loub\`{e}re's classical method for constructing magic squares.

Locations in a $p^r \times p^r$ array can be described by elements of the vector space $\Z _p ^{2r}$. Rows are enumerated from the top, beginning with $0$ and ending with
$p^r -1$; columns are enumerated in the same way from left to right. By expressing each row number in base $p$ we can identify row locations with $\Z _p ^r$. Similarly we can identify column locations with $\Z _p ^r$, and therefore any grid location (row,column) may be identified with an element of $\Z _p ^{2r}$. By way of illustration, the symbol ``26" in the left portion of Figure \ref{f:mps8} lies in location $011101\in \Z _2 ^{2\cdot 3}$, where the first three entries indicate the row location and the last three entries indicate the column location.

Symbols in the set $S=\{0,1,\dots ,p^{2r}-1\}\subset \Z$ may be placed in a $p^r \times p^r$ array. These symbols  can also be described by elements of $\Z _p ^{2r}$. Each symbol $\Lambda $ has a unique base-$p$ expansion
$$
\Lambda =\lambda _{p^{2r-1}}p^{2r-1}+\lambda _{p^{2r-2}}p^{2r-2}+\cdots +\lambda _{p}\cdot p^1+\lambda _{1}\cdot p^0,
    $$
where $\lambda _{p ^j}\in\{0,1,\dots ,p-1\}$ for each $j\in \{0,1,\dots ,p^{2r-1}\}$. Therefore we can make the identification
$$
\Lambda\in\Z \longleftrightarrow (\lambda _{p^{2r-1}},\dots ,\lambda _{p^0})\in \Z _p^{2r}.
   $$

For example, if $p=2$ and $r=3$ as in the left portion of Figure \ref{f:mps8}, then the symbol  $\Lambda =26$ corresponds to $011010\in \Z_2^{2\cdot  3}$.

 A linear assignment of symbols to locations is as follows. Let $M$ be a $2r\times 2r$ matrix with entries in $\Z _p$. Define a linear mapping $T_M:\Z _p ^{2r}\rightarrow \Z _p ^{2r}$ by $T_M(\Lambda )=M\Lambda $. The mapping $T_M$ uniquely determines a $p^r\times p^r$ array with entries in $\{0,1,\dots ,p^{r+s}-1\}$ by declaring $T_M(\Lambda )$ to be the array location housing the number with base-$p$ representation $\Lambda $ (as described in the previous paragraphs). When $p=2$, $r=3$, and
\begin{equation}\label{e:thingone}
M=\left[
\begin{array}{cccccc}
 1 & 1 & 0 & 1 & 1 & 1 \\
 0 & 0 & 0 & 0 & 1 & 1 \\
 0 & 0 & 0 & 1 & 1 & 0 \\
 1 & 1 & 1 & 1 & 1 & 0 \\
 0 & 1 & 1 & 0 & 0 & 0 \\
 1 & 1 & 0 & 0 & 0 & 0 \\
\end{array}
\right],
   \end{equation}
the mapping $T_M$ determines the array in the left portion of Figure \ref{f:mps8}. To see that the number $26$ is sent the to the correct location, recall that $\Lambda =011010$ and so $M\Lambda =011101$, which is indeed the location housing $26$ as described above. Similarly, the matrix
\begin{equation}\label{e:thingtwo}
M=\left[
\begin{array}{cccc}
 2 & 2 & 2 & 0 \\
 0 & 0 & 1 & 1 \\
 2 & 0 & 2 & 2 \\
 1 & 1 & 0 & 0 \\
\end{array}
\right]
   \end{equation}
determines the order-$9$ array in the right portion of Figure \ref{f:mps8}.

Any magic square that possesses a linear representation as described above will be referred to as a {\bf linear magic square}.

\section{A Linear Construction of Most-Perfect Squares}\label{s:construction}
In this section we describe a linear construction of type-$p$ most-perfect squares of order $p^r$, where $r\geq 2$. In case $r=1$, a type-$p$ most-perfect square of order $p$ is simply a pandiagonal magic square of
prime order; de la Loub\`{e}re's method can be applied in that case as described in \cite{wB87}. We will be performing arithmetic over $\Z$ and over $\Z_p$; we occasionally let $\oplus$ denote addition over $\Z$ to distinguish it from addition over $\Z _p$.

For $r\geq 2$ let $\alpha_j=p^{2r-j}$ for $1\leq j\leq 2r$.  When considered as vectors in $\Z_p^{2r}$ as described in Section \ref{ss:linearity}, the set $\{\alpha _1,\dots ,\alpha _{2r}\}$ is the standard basis for $\Z_p^{2r}$.\footnote{Other bases for the space of symbols, such as a reordering the the $\alpha _j$'s, would work just as well and give different linear most-perfect magic squares.}  Let $L_r$ denote the $r\times r$ symmetric matrix with $1$'s on and below the off diagonal and $0$'s elsewhere. Therefore, if $L_r=(\ell _{ij})$ then
 $$
\ell _{ij}=\begin{cases}
1 & i+j> r \\
0 & i+j \leq r
\end{cases}
\quad \text{ and }\quad
L_r=\left[\begin{array}{cccc}
0 & 0 & \cdots & 1 \\
0 &      \vdots & \Ddots & 1\\
\vdots &1 &\cdots &1 \\
1 & 1 &\cdots &1
\end{array}\right].
   $$
Let $L$ denote the $2r\times 2r$ symmetric matrix matrix with block  form
$$
L=\left[\begin{array}{cc} 0 & L_r \\ L_r & 0\end{array}\right],
    $$
and let $\tilde L$ be the $2r\times 2r$ matrix with entries in $\Z_p$ whose columns $\tilde \ell _j$ satisfy
$$
\tilde \ell _j+(e_1+e_{r+1}) =\ell _j \quad 1\leq j\leq 2r,
   $$
where $e_1,\dots ,e_{2r}$ are the elementary vectors in $\Z_p^{2r}$. The matrix $\tilde L$ has the form
$$
\tilde L=
{\tiny
\left[\begin{array}{rrrrrrr|rrrrrrr}
-1 & -1 & -1 & -1 & \cdots & -1 & -1   & -1 & \cdots  & -1 & -1 & -1 & -1 & 0 \\
0& 0  & 0 & 0 & \cdots & 0 & 0   & 0 & \cdots  & 0 & 0 & 0 & 1 & 1 \\
0& 0  & 0 & 0 & \cdots & 0 & 0   & 0 & \cdots  & 0 & 0 & 1 & 1 & 1 \\
0& 0  & 0 & 0 & \cdots & 0 & 0   & 0 & \cdots  & 0 & 1 & 1 & 1 & 1 \\
0& 0  & 0 & 0 & \cdots & 0 & 0   & 0 & \cdots  & 1 & 1 & 1 & 1 & 1 \\
\vdots & \vdots  & \vdots & \vdots & \cdots & \vdots  & \vdots   & \vdots & \Ddots  & \vdots & \vdots & \vdots & \vdots & \vdots \\
0& 0  & 0 & 0 & \cdots & 0 & 0   & 1 & \cdots  & 1 & 1 & 1 & 1 & 1 \\
\hline
-1 & \cdots  & -1 & -1 & -1 & -1 & 0 &    -1 & -1 & -1 & -1 & \cdots & -1 & -1 \\
0 & \cdots  & 0 & 0 & 0 & 1 & 1 &    0& 0  & 0 & 0 & \cdots & 0 & 0 \\
0 & \cdots  & 0 & 0 & 1 & 1 & 1 &    0& 0  & 0 & 0 & \cdots & 0 & 0 \\
0 & \cdots  & 0 & 1 & 1 & 1 & 1 &    0& 0  & 0 & 0 & \cdots & 0 & 0 \\
0 & \cdots  & 1 & 1 & 1 & 1 & 1 &    0& 0  & 0 & 0 & \cdots & 0 & 0 \\
\vdots & \Ddots  & \vdots & \vdots & \vdots & \vdots & \vdots &  \vdots & \vdots  & \vdots & \vdots & \cdots & \vdots  & \vdots\\
1 & \cdots  & 1 & 1 & 1 & 1 & 1 &    0& 0  & 0 & 0 & \cdots & 0 & 0
\end{array}\right].
}
   $$
 Following Section \ref{ss:linearity}, we regard $\tilde L$ as a candidate for a matrix producing a linear most-perfect square of order $p^r$, with column $\tilde \ell _j$ being the location of symbol $\alpha _j$ for $1 \leq j\leq 2r$. The matrix $\tilde L$ is a good candidate for the following reasons: Observe that the columns of $L$ consist of the possible location vectors one can add to an existing location vector in order to  move one unit right or one unit down from that existing location. It is desirable to have control over these vectors to establish the $p\times p$ property in the resulting magic square. Meanwhile, translates of the location vector $e_1+e_{r+1}$ play an important role in the complementary property. The matrix $\tilde L$ gives some control over both of these features.

 Unfortunately, for reasons that will be made apparent as we proceed, $\tilde L$ won't quite work because aside from $r=2$ there is no symbol $\delta \in \Z_p^{2r}$ such that $\delta$ is nonzero in all of its components and
$\tilde L . \delta = e_1+e_{r+1}$. To fix this problem we modify $\tilde L$ to obtain a $2r\times 2r$ matrix $M$ over $\Z_p$ with columns $m_1,\dots ,m_{2r}$ satisfying
\begin{itemize}
\item $m_j=\tilde \ell _j$ for $ 1\leq j <r$ and $ r<j < 2r$.
\item $m_r=\tilde\ell _r +\ds \sum_{j=2}^{r-1} (-1)^{j+1} \tilde \ell_{r-j}$
\item $m_{2r}=\tilde \ell_{2r}+\ds\sum_{j=2}^{r-1} (-1)^{j+1} \tilde \ell_{2r-j}$
\end{itemize}
 If $r=2$ then $M=\tilde L$. If $r>2$ then $M$ agrees with $\tilde L$ in all but the $r$-th and $2r$-th columns, and those columns of $M$ are obtained from the respective columns of $\tilde L$ by applying elementary column operations on $\tilde L$. Further, if we put
\begin{equation}\label{e:delta}
\delta = \sum_{j=1}^{r}(-1)^{r+j} [e_{j}+e_{r+j}]
\end{equation}
then $\delta$ is non-zero in all of its components and $M.\delta = e_1+e_{r+1}$.

 The matrix $M$ has form shown below, where the undetermined elements depend on the parity of $r$ (left option when $r$ is odd, right option when
 $r$ is even):
$$
{\tiny
M=\left[\begin{array}{rrrrrrc|rrrrrrc}
-1 & -1 & -1 & -1 & \cdots & -1 & 0\ {\rm or}\ -1   & -1 & \cdots  & -1 & -1 & -1 & -1 & 1\ {\rm or}\ 0 \\
0& 0  & 0 & 0 & \cdots & 0 & 0   & 0 & \cdots  & 0 & 0 & 0 & 1 & 1 \\
0& 0  & 0 & 0 & \cdots & 0 & 0   & 0 & \cdots  & 0 & 0 & 1 & 1 & 0 \\
0& 0  & 0 & 0 & \cdots & 0 & 0   & 0 & \cdots  & 0 & 1 & 1 & 1 & 1 \\
0& 0  & 0 & 0 & \cdots & 0 & 0   & 0 & \cdots  & 1 & 1 & 1 & 1 & 0 \\
\vdots & \vdots  & \vdots & \vdots & \cdots & \vdots  & \vdots   & \vdots & \Ddots  & \vdots & \vdots & \vdots & \vdots & \vdots \\
0& 0  & 0 & 0 & \cdots & 0 & 0   & 1 & \cdots  & 1 & 1 & 1 & 1 & 0\ {\rm or}\ 1 \\
\hline
-1 & \cdots  & -1 & -1 & -1 & -1 & 1\ {\rm or}\ 0 &    -1 & -1 & -1 & -1 & \cdots & -1 & 0\ {\rm or}\ -1 \\
0 & \cdots  & 0 & 0 & 0 & 1 & 1 &    0& 0  & 0 & 0 & \cdots & 0 & 0 \\
0 & \cdots  & 0 & 0 & 1 & 1 & 0 &    0& 0  & 0 & 0 & \cdots & 0 & 0 \\
0 & \cdots  & 0 & 1 & 1 & 1 & 1 &    0& 0  & 0 & 0 & \cdots & 0 & 0 \\
0 & \cdots  & 1 & 1 & 1 & 1 & 0 &    0& 0  & 0 & 0 & \cdots & 0 & 0 \\
\vdots & \Ddots  & \vdots & \vdots & \vdots & \vdots & \vdots &  \vdots & \vdots  & \vdots & \vdots & \cdots & \vdots  & \vdots\\
1 & \cdots  & 1 & 1 & 1 & 1 & 0\ {\rm or}\ 1 &    0& 0  & 0 & 0 & \cdots & 0 & 0
\end{array}\right]
}
   $$

As described in Section \ref{ss:linearity}, let $R$ denote the square of order $p^r$ ($r\geq 2$) obtained by viewing $M$ as the matrix of the linear transformation carrying symbols $\{0,1,\dots , (p^r)^2-1\}$, each expressed as $2r$-tuples with respect to the basis $\{\alpha_1,\dots ,\alpha_{2r}\}$ and viewed as a member of $\Z _p^{2r}$, to locations determined by members of $\Z_p^{2r}$. Specific examples of $M$ are given in (\ref{e:thingone}) and (\ref{e:thingtwo}) in the cases $p=2$, $r=3$ and $p=3$, $r=2$, respectively. These $M$ give rise to the most-perfect squares $R$ shown in Figure \ref{f:mps8} (left and right, respectively).

We will show that $R$ is a most-perfect magic square by checking that $R$ satisfies the hypotheses of Proposition \ref{p:mpspcondition}. This will require several lemmas.

\begin{lem}\label{l:natural}
$R$ is a natural square.
\end{lem}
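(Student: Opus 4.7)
The plan is to show that $R$ is natural by proving that the matrix $M$ is invertible over $\Z_p$. Since $T_M$ sends the symbol whose base-$p$ expansion is $\Lambda\in\Z_p^{2r}$ to location $M\Lambda$, the map $T_M$ is a bijection on $\Z_p^{2r}$ (and hence places each of $\{0,1,\dots,p^{2r}-1\}$ in a distinct cell) precisely when $\det M$ is nonzero in $\Z_p$.

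As a first reduction, observe that columns $m_r$ and $m_{2r}$ of $M$ differ from $\tilde\ell_r$ and $\tilde\ell_{2r}$ only by fixed linear combinations of other columns of $\tilde L$, by the bulleted definition of $M$. Since such column-addition operations do not change the determinant, $\det M=\det\tilde L$, and it suffices to show that $\tilde L$ is nonsingular over $\Z_p$.

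For that, I would write $\tilde L=L-(e_1+e_{r+1})\mathbf{1}^{T}$, where $\mathbf{1}\in\Z_p^{2r}$ is the all-ones column and $L$ is the block matrix with the two $L_r$ blocks on the anti-diagonal. The block form of $L$, together with the fact that reversing the rows of $L_r$ produces a unit upper-triangular matrix, gives $\det L=\pm 1$; in particular $L$ is invertible over every $\Z_p$. Applying the matrix determinant lemma then yields
$$
\det\tilde L=\det(L)\bigl(1-\mathbf{1}^{T}L^{-1}(e_1+e_{r+1})\bigr).
$$

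The main step is thus to evaluate $\mathbf{1}^{T}L^{-1}(e_1+e_{r+1})$. Using the block form of $L^{-1}$, this quantity equals $2\cdot\mathbf{1}_r^{T}L_r^{-1}e_1$, and I would determine $L_r^{-1}e_1$ by solving $L_r x=e_1$ by back-substitution: the anti-triangular form of $L_r$ forces $x_r=1$, $x_{r-1}=-1$, and $x_i=0$ for $i\le r-2$, so the entries of $x$ sum to zero in $\Z$, hence in every $\Z_p$. Therefore $\det\tilde L=\det L\ne 0$ in $\Z_p$, so $M$ is invertible and $R$ is natural. The only real obstacle is this back-substitution identity; once $L_r^{-1}e_1=(0,\dots,0,-1,1)^{T}$ is in hand, the conclusion is characteristic-free and immediate.
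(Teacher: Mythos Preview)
Your argument is correct. The reduction $\det M=\det\tilde L$ matches the paper exactly, but from that point you diverge: the paper performs a single row operation (subtracting row~$1$ from row~$r+1$) and then evaluates the determinant by cofactor expansion along the top row, obtaining $\pm 1$ directly. You instead write $\tilde L=L-(e_1+e_{r+1})\mathbf{1}^{T}$ as a rank-one perturbation of the invertible block matrix $L$, apply the matrix determinant lemma, and reduce everything to the identity $L_r^{-1}e_1=(0,\dots,0,-1,1)^{T}$, which follows from the anti-triangular shape of $L_r$. Since this vector has entries summing to zero over~$\Z$, you get $\det\tilde L=\det L=\pm 1$, independent of $p$. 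Both approaches yield a characteristic-free integer value $\pm 1$; the paper's is slightly more hands-on, while yours isolates the structural reason (the rank-one correction is ``orthogonal'' to $L^{-1}\mathbf{1}$ in the relevant sense) and would generalize more readily to other rank-one modifications of $L$.
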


\begin{proof}
We show $M$ is nonsingular. Since $M$ is obtained from $\tilde L$ by elementary column operations, it suffices to show that
$\tilde L$ is nonsingular. If we replace the $(r+1)$-st row of $\tilde L$ by the sum of that row and the negative of the first row, we obtain a matrix $\hat L$ of the form
$$
\hat L=
{\tiny
\left[\begin{array}{rrrrrrr|rrrrrrr}
-1 & -1 & -1 & -1 & \cdots & -1 & -1   & -1 & \cdots  & -1 & -1 & -1 & -1 & 0 \\
0& 0  & 0 & 0 & \cdots & 0 & 0   & 0 & \cdots  & 0 & 0 & 0 & 1 & 1 \\
0& 0  & 0 & 0 & \cdots & 0 & 0   & 0 & \cdots  & 0 & 0 & 1 & 1 & 1 \\
0& 0  & 0 & 0 & \cdots & 0 & 0   & 0 & \cdots  & 0 & 1 & 1 & 1 & 1 \\
0& 0  & 0 & 0 & \cdots & 0 & 0   & 0 & \cdots  & 1 & 1 & 1 & 1 & 1 \\
\vdots & \vdots  & \vdots & \vdots & \cdots & \vdots  & \vdots   & \vdots & \Ddots  & \vdots & \vdots & \vdots & \vdots & \vdots \\
0& 0  & 0 & 0 & \cdots & 0 & 0   & 1 & \cdots  & 1 & 1 & 1 & 1 & 1 \\
\hline
0 & \cdots  & 0 & 0 & 0 & 0 & 1 &    0 & 0 & 0 & 0 & \cdots & 0 & 0 \\
0 & \cdots  & 0 & 0 & 0 & 1 & 1 &    0& 0  & 0 & 0 & \cdots & 0 & 0 \\
0 & \cdots  & 0 & 0 & 1 & 1 & 1 &    0& 0  & 0 & 0 & \cdots & 0 & 0 \\
0 & \cdots  & 0 & 1 & 1 & 1 & 1 &    0& 0  & 0 & 0 & \cdots & 0 & 0 \\
0 & \cdots  & 1 & 1 & 1 & 1 & 1 &    0& 0  & 0 & 0 & \cdots & 0 & 0 \\
\vdots & \Ddots  & \vdots & \vdots & \vdots & \vdots & \vdots &  \vdots & \vdots  & \vdots & \vdots & \cdots & \vdots  & \vdots\\
1 & \cdots  & 1 & 1 & 1 & 1 & 1 &    0& 0  & 0 & 0 & \cdots & 0 & 0
\end{array}\right].
}
   $$
By expanding along the top row of $\hat L$, we see that
$$\det (\hat L)=\pm \sum_{j=1}^{2r-1} (-1)^j=\pm 1 \ne 0.$$
We conclude that $M$ is nonsingular.
\end{proof}

\begin{lem}\label{l:pcomplementary}
$R$ possesses the complementary property.
\end{lem}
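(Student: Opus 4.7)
The plan is to exploit the key relation $M\delta = e_1+e_{r+1}$ that was highlighted in the construction, together with the crucial structural fact that every component of $\delta$ is $\pm 1$ and hence a unit in $\Z_p$.

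First I would unpack what a broken main diagonal at $n/p = p^{r-1}$ spacing looks like as a translation in $\Z_p^{2r}$. Under the base-$p$ identification of rows with $\Z_p^r$ described in Section \ref{ss:linearity}, adding $p^{r-1}$ to a row index (mod $p^r$) is the same as adding $e_1$ component-wise in $\Z_p^r$; the analogous statement holds for columns with $e_{r+1}$. Thus, starting from any location $\ell \in \Z_p^{2r}$, the $p$ entries along the broken main diagonal at spacing $p^{r-1}$ occupy the locations $\ell,\; \ell+(e_1+e_{r+1}),\; \ldots,\; \ell+(p-1)(e_1+e_{r+1})$.

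Next I would translate these locations back to symbols via $M^{-1}$. If $\Lambda = M^{-1}\ell$ is the symbol at $\ell$, then by the defining identity $M\delta = e_1+e_{r+1}$, the symbols along this broken diagonal are precisely $\Lambda,\; \Lambda+\delta,\; \Lambda+2\delta,\; \ldots,\; \Lambda+(p-1)\delta$, where addition is component-wise in $\Z_p^{2r}$. By the explicit formula (\ref{e:delta}), every component $\delta_j$ equals $\pm 1$, which is a unit in $\Z_p$. Consequently, for each digit position $j$, as $k$ runs over $\{0,1,\ldots,p-1\}$ the $j$-th coordinate $\lambda_j + k\delta_j \pmod{p}$ runs through the full set $\{0,1,\ldots,p-1\}$ exactly once.

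Finally I would compute the integer sum. Each symbol $\Lambda + k\delta$ corresponds to the integer $\sum_{j=1}^{2r} (\lambda_j + k\delta_j \bmod p)\, p^{2r-j}$, so summing over $k=0,\ldots,p-1$ and interchanging summations yields
\begin{equation*}
\sum_{k=0}^{p-1}(\Lambda + k\delta) \;=\; \sum_{j=1}^{2r} \Bigl(\sum_{t=0}^{p-1} t\Bigr) p^{2r-j} \;=\; \frac{p(p-1)}{2}\cdot \frac{p^{2r}-1}{p-1} \;=\; \frac{p(n^2-1)}{2},
\end{equation*}
since $n = p^r$. This is precisely the complementary sum. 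The step that required the most care is the first one, making sure the passage from the integer spacing $p^{r-1}$ to the vector shift $e_1+e_{r+1}$ is justified (including when the arithmetic wraps around mod $p^r$); everything after that is driven by the two purely algebraic facts $M\delta=e_1+e_{r+1}$ and that $\delta$ has only $\pm 1$ entries, both of which are already established.
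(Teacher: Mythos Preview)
Your proposal is correct and follows essentially the same approach as the paper's proof: both use $M\delta = e_1+e_{r+1}$ to identify the diagonal symbols as $\Lambda, \Lambda+\delta, \ldots, \Lambda+(p-1)\delta$, then exploit that every component of $\delta$ is a unit in $\Z_p$ so that each base-$p$ digit runs through $\{0,1,\ldots,p-1\}$, yielding the sum $\frac{p(p-1)}{2}\sum_j p^{2r-j}=\frac{p(p^{2r}-1)}{2}$. Your explicit justification that the integer spacing $n/p=p^{r-1}$ corresponds to the vector shift $e_1+e_{r+1}$ is a detail the paper simply assumes.
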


\begin{proof}
Let $v$ be location in $R$. We need to show that the symbols in $R$ with locations
$\{v+j\cdot (e_1+e_{r+1})\mid j\in \Z_p\}$  add to $\ds \frac{p(p^{2r}-1)}{2}$. Therefore, if $\nu = (\nu_1, \nu_2, \dots, \nu_{2r})\in \Z_p^{2r}$ with $M.\nu=v$, and $M.\delta =e_1+e_{r+1}$, we need to show that
\begin{equation}\label{e:diagsump}
\bigoplus_{j\in \Z_p} (\nu+j\cdot \delta):=\nu \oplus (\nu+\delta) \oplus (\nu+2\delta) \oplus \cdots \oplus (\nu+(p-1)\delta) =\frac{p(p^{2r}-1)}{2}.
  \end{equation}
Let $1\leq k\leq 2r$ and $\delta _k$ the $k$-th component of $\delta$. Because $\delta _k$ is nonzero in $\Z_p$, we know that upon rearrangement
$$
\bigoplus_{j\in \Z_p} (\nu_k+j\cdot \delta_k)=0\oplus 1\oplus \cdots \oplus (p-1)=\frac{p(p-1)}{2}.
   $$
Therefore, if we apply the base-$p$ addition algorithm in (\ref{e:diagsump}), we obtain
\begin{align*}
\bigoplus_{j\in \Z_p} (\nu+j\cdot \delta)&=\frac{p(p-1)}{2}(\alpha _1\oplus \alpha _2\oplus \cdots \oplus \alpha _{2r})\\
&=\frac{p(p-1)}{2}(p^{2r-1}\oplus p^{2r-2}\oplus \cdots \oplus p^1 \oplus 1)\\
&=\frac{p(p^{2r}-1)}{2},
  \end{align*}
as desired.
\end{proof}

\begin{lem}\label{l:pbypprop}
$R$ possesses the $p\times p$ property.
\end{lem}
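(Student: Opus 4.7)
The plan is to reduce the $p\times p$ property to a coordinate-wise balance statement via the linear representation of $R$. Since $R$ is natural (Lemma \ref{l:natural}), the symbol at location $v\in\Z_p^{2r}$ equals $M^{-1}v$ regarded as the integer $\sum_j(M^{-1}v)_j\alpha_j$ under the identification $\alpha_j=p^{2r-j}$. Hence for any set $P$ of positions, the sum of symbols in $P$ equals $\sum_j\alpha_j\bigl(\sum_{v\in P}(M^{-1}v)_j\bigr)$, and since $\frac{p^2(p^{2r}-1)}{2}=\frac{p^2(p-1)}{2}\sum_j\alpha_j$, it suffices to show that for each coordinate $j$, the multiset $\{(M^{-1}v)_j:v\in P\}$ contains each element of $\{0,1,\dots,p-1\}$ exactly $p$ times.

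To analyze $M^{-1}P$, I first describe $P=P(r_0,c_0)$ inside $\Z_p^{2r}$. Writing $r_0=pr_0'+s_r$ and $c_0=pc_0'+s_c$ with $0\le s_r,s_c<p$, the $p$ consecutive row indices $r_0,r_0+1,\dots,r_0+p-1 \pmod{p^r}$ split into an initial segment of length $p-s_r$ with high digits $r_0'$ and (if $s_r>0$) a terminal segment of length $s_r$ with high digits $r_0'+1$; similarly for columns. Thus $P$ is the disjoint union of one, two, or four affine sub-rectangles in $\Z_p^{2r}$. Within each sub-rectangle no carries occur, so the row-step is $\ell_{r+1}=\alpha_r$ and the column-step is $\ell_1=\alpha_{2r}$, whose preimages under $M^{-1}$ are $\alpha_{r+1}+\delta$ and $\alpha_1+\delta$ respectively (using $\ell_j=m_j+(\alpha_1+\alpha_{r+1})$ for $j\notin\{r,2r\}$ together with $M\delta=\alpha_1+\alpha_{r+1}$). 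Adjacent sub-rectangles are shifted by carry corrections of the form $(p-1-s_c)\ell_1+\ell_{k^*_c}$ (column carry) or the row analogue, where $k^*_c\in\{1,\dots,r\}$ is determined by the trailing $(p-1)$'s in the base-$p$ expansion of $c_0'$; the preimage of the shift involves an alternating-sum correction exactly when $k^*_c=r$ or $k^*_r=r$, arising from the definitions of $m_r$ and $m_{2r}$.

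Within a single sub-rectangle that spans all $p$ values of at least one parameter, the preimage is an affine strip whose two spanning vectors both have $\delta$ as a summand; since $\delta$ has no zero components by (\ref{e:delta}), at least one of the two spanning vectors has a nonzero $j$-th component for every $j$, so the projection onto each coordinate is surjective and therefore balanced on the $p^2$ points. The main obstacle is the case when $P$ genuinely decomposes into two or four sub-rectangles, since each individual strip then contains only $p(p-s_c)$ or $p\cdot s_c$ positions and fails to achieve balance on its own. The strategy is to show, through a case analysis on the carry indices $k^*_r$ and $k^*_c$ (including the special situation when either equals $r$), that the shifts between adjacent sub-rectangles combine across the whole disjoint union to restore balance in every coordinate; the role of the modifications $m_r$ and $m_{2r}$ in $M$ is crucial here, exactly as it is for forcing $\delta$ to have all nonzero components in the proof of Lemma \ref{l:pcomplementary}.
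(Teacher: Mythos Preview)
Your target statement---that every $p\times p$ block is coordinate-balanced, i.e.\ each residue of $\Z_p$ appears exactly $p$ times in each coordinate of $M^{-1}P$---is stronger than what the paper proves, and your route to it is considerably more intricate. You decompose the block into carry-free sub-rectangles and then promise to reconcile the pieces via a case analysis on the carry depths $k^*_r,k^*_c$. But the hard part of your proposal (the two- and four-rectangle cases) is only asserted, not carried out: you say the shifts ``combine across the whole disjoint union to restore balance,'' yet give no computation showing this, and the alternating-sum correction when $k^*=r$ makes a bare-hands verification genuinely unpleasant. As written, this is a gap.

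The paper sidesteps all of it with two observations you are missing. First, it never computes the sum of a single $p\times p$ block directly; instead it shows that any two horizontally (or vertically) adjacent $p\times p$ blocks have equal sums, so the common value is forced to be $p^2(p^{2r}-1)/2$ by averaging over a tiling of $R$. Second---and this is exactly the idea your carry bookkeeping is substituting for---the paper observes that \emph{regardless of which carry occurs}, every one-step row move has $M$-preimage of the form $\delta+\beta$ with $\beta\in\mathrm{span}\{\alpha_{r+1},\dots,\alpha_{2r}\}$, and every one-step column move has preimage $\delta+\gamma$ with $\gamma\in\mathrm{span}\{\alpha_1,\dots,\alpha_r\}$. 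This separation of half-spans means one never needs to know which $\ell_j$ actually fired at a given step, so your sub-rectangle decomposition and the case split on $k^*_r,k^*_c$ become unnecessary. Indeed, once you have $a_{jk}=a_{11}+(j{+}k{-}2)\delta+B_j+\Gamma_k$ with $B_j$ in the second half-span and $\Gamma_k$ in the first, even your stronger balance claim drops out immediately: for $1\le m\le r$ the $B_j$ contribute nothing to coordinate $m$, and $(j{+}k{-}2)\delta_m$ sweeps through all of $\Z_p$ as $j$ varies with $k$ fixed; symmetrically for $r{+}1\le m\le 2r$.
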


\begin{proof}
It suffices to verify that if $A$ is any $(p+1)\times (p+1)$-subsquare of $R$ formed from consecutive rows and columns (allowing wraparound), with
$$
A=\begin{array}{|cccc|c|}
\hline
a_{11} & a_{12} & \dots & a_{1p} & a_{1,p+1}\\
a_{21} & a_{22} & \dots & a_{2p} & a_{2,p+1}\\
\vdots & \vdots & \vdots& \vdots &\vdots \\
a_{p1} & a_{p2} &\dots  & a_{pp} & a_{p,p+1} \\
\hline
a_{p+1,1} & a_{p+1,2} & \dots & a_{p+1,p} & a_{p+1,p+1}\\
\hline
\end{array},
  $$
then $\ds \bigoplus_{j=1}^p a_{1j} = \bigoplus_{j=1}^p a_{p+1,j}$ and $\ds \bigoplus_{j=1}^p a_{j1} = \bigoplus_{j=1}^p a_{j,p+1}$. (This is enough to show that all $p\times p$ subsquares in $R$ formed from consecutive rows and columns possess the same integer sum. That common integer sum is the sum of all symbols in $R$ divided by the number of $p\times p$ subsquares needed to tile $R$. Since $R$ is natural, this computation yields $\ds \frac{p^2(p^{2r}-1)}{2}$ for  the common sum of symbols in $p\times p$ subsquares, as  desired.)

We show that $\ds \bigoplus_{j=1}^p a_{j1}=\bigoplus _{j=1}^p a_{j,p+1}$. The other sum has a similar verification. We find the location for $a_{j1}$ by moving $j-1$ steps down from $a_{11}$, so
$$
a_{j1}=a_{11}+\sum_{i=2}^j d_i \quad \text{where}\quad M.d_i\in \{\ell _{r+1},\ell _{r+2},\dots ,\ell _{2r}\}.
   $$
Observe that for $r+1\leq j \leq 2r$
$$
\ell _j =e_1 + e_{r+1} +\tilde \ell _j =
\begin{cases}
(e_1+e_{r+1}) + m_j & \text{ if } r+1\leq j < 2r ,\\
(e_1+ e_{r+1})+ m_{2r}-\ds\sum_{k=2}^{r-1}(-1)^{k+1}m_{2r-k} & \text{ if } j=2r.
\end{cases}
   $$
From the construction of $R$ via $M$, it follows that $d_i=\delta +\beta _i$ where
$$
\beta _i\in\{\alpha_{r+1},\dots ,  \alpha _{2r-1}, \alpha _{2r}-\sum_{k=2}^{r-1}(-1)^{k+1}\alpha _{2r-k}\}\subseteq {\rm span}\{\alpha _{r+1},\dots ,\alpha_{2r}\},
   $$
and that
\begin{equation}\label{e:A1}
a_{j1}=(j-1)\delta +a_{11}+ \sum_{i=2}^j\beta _i.
\end{equation}
Meanwhile, we obtain $a_{j,p+1}$ by moving $p$ steps to the right from $a_{j1}$. Therefore
$$
a_{j,p+1}= \left[(j-1)\delta +a_{11}+ \sum_{i=2}^j\beta _i\right] +\sum_{i=2}^{p+1} r_i \quad \text{where}\quad  M.r_i\in\{\ell_1, \dots ,\ell _r\}.
   $$
Likewise it follows that $r_i=\delta +\gamma _i$ where
$\gamma _i\in \{\alpha _1, \dots ,\alpha _{r-1}, \alpha _{r}-\sum_{k=2}^{r-1}(-1)^{k+1}\alpha _{r-k}\}$. Therefore
\begin{equation}\label{e:A2}
\begin{split}
a_{j,p+1}&=\left[(j-1)\delta +a_{11}+ \sum_{i=2}^j\beta _i\right] +\sum_{i=2}^{p+1} (\delta +\gamma _i)\\
&=\left[(j-1)\delta +a_{11}+ \sum_{i=2}^j\beta _i\right] +\sum_{i=2}^{p+1} \gamma _i\\
&= \left[(j-1)\delta +a_{11}+ \sum_{i=2}^j\beta _i\right] +\gamma ,
\end{split}
   \end{equation}
where $\gamma =\sum_{i=1}^p \gamma _i\in {\rm span}\{\alpha _1,\dots ,\alpha _r\}$ and we recall $p\cdot \delta =0$ in $\Z_p$. To finish this verification it suffices to show equality of integer sums in each component. That is, we seek to show that $\ds \bigoplus_{j=1}^p (a_{j1})_k=\bigoplus _{j=1}^p (a_{j,p+1})_k$ for $1\leq k\leq 2r$. If $r+1\leq k\leq 2r$ then, because $\gamma \in {\rm span}\{\alpha _1,\dots, \alpha _r\}$, we have
$$
(a_{j,p+1})_k= \left[(j-1)\delta +a_{11}+ \sum_{i=2}^j\beta _i\right]_k = (a_{j1})_k.
  $$
It follows that $\ds \bigoplus_{j=1}^p (a_{j1})_k=\bigoplus _{j=1}^p (a_{j,p+1})_k$ for $r+1\leq k\leq 2r$. Meanwhile, if $1\leq k\leq r$ then
$(a_{j1})_k=[(j-1)\delta +a_{11}]_k$ and $(a_{j,p+1})_k=[(j-1)\delta +a_{11}+\gamma]_k$. Because $\delta _k$ is nonzero, upon reordering we have
$$
\bigoplus_{j=1}^p (a_{j1})_k=1\oplus 2\oplus \cdots \oplus p-1=\bigoplus_{j=1}^p (a_{j,p+1})_k.
  $$
\end{proof}

\begin{thm} \label{t:RMPSp}
The square $R$, as constructed above via $M$, is a linear type-$p$ most-perfect square of order $p^r$.
\end{thm}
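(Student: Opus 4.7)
The proof is essentially an assembly of the lemmas already proved in this section together with the characterization in Proposition \ref{p:mpspcondition}. My plan is as follows.

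First, I would observe that the construction fixes $r\geq 2$, so the order $n=p^r$ is automatically a multiple of $p^2$. This is exactly the divisibility hypothesis needed to invoke Proposition \ref{p:mpspcondition}, which tells us that to establish the type-$p$ most-perfect property it suffices to check that $R$ is (a) a natural square, (b) has the complementary property, and (c) has the $p\times p$ property. Each of these three items has already been addressed: (a) is Lemma \ref{l:natural}, (b) is Lemma \ref{l:pcomplementary}, and (c) is Lemma \ref{l:pbypprop}. Chaining these with Proposition \ref{p:mpspcondition} yields that $R$ is a most-perfect square of type $p$.

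The one additional word to say concerns linearity: since $R$ was produced in Section \ref{ss:linearity} from the explicit $2r\times 2r$ matrix $M$ over $\Z_p$ acting on the symbol-coordinates in $\Z_p^{2r}$, $R$ has by definition a linear representation, so it qualifies as a linear magic square. I would also briefly remark that Lemma \ref{l:natural} (via nonsingularity of $M$) is what guarantees this linear assignment is actually a bijection between symbols and locations, so the object produced is a genuine square filled with $\{0,1,\dots,p^{2r}-1\}$ exactly once.

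There is no real obstacle at this stage — all the substantive work has been pushed into the preceding lemmas (the combinatorics of the complementary sums in Lemma \ref{l:pcomplementary}, the careful bookkeeping of the $\beta_i$'s and $\gamma_i$'s in Lemma \ref{l:pbypprop}, and the column-reduction argument in Lemma \ref{l:natural}). The only care needed in writing the theorem's proof is to explicitly cite the $r\geq 2$ hypothesis so that the hypothesis of Proposition \ref{p:mpspcondition} is satisfied, and to note that $M$ being obtained from $\tilde L$ by elementary column operations preserves the key property $M\cdot \delta = e_1+e_{r+1}$ used in Lemma \ref{l:pcomplementary}. With these points made, the proof reduces to a one- or two-sentence invocation of the machinery already built.
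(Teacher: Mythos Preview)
Your proposal is correct and follows essentially the same route as the paper: invoke Proposition~\ref{p:mpspcondition} (using $r\geq 2$ so that $p^2\mid n$) and cite Lemmas~\ref{l:natural}, \ref{l:pcomplementary}, and \ref{l:pbypprop}. One small quibble in your commentary: elementary column operations do not \emph{preserve} the relation $M\delta=e_1+e_{r+1}$ --- indeed $\tilde L$ has no $\delta$ with all nonzero components satisfying this for $r>2$, which is precisely why $M$ was introduced to \emph{create} that property --- but this aside is not part of the actual argument and does not affect its validity.
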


\begin{proof}
By Proposition \ref{p:mpspcondition} it suffices to show that $R$ is natural, complementary, and possesses the $p\times p$ property. These characteristics are verified in Lemmas \ref{l:natural}, \ref{l:pcomplementary}, and \ref{l:pbypprop}, respectively.
\end{proof}

\end{document}